\newtheorem{theorem}{Theorem}[section]
\newtheorem{lemma}{Lemma}[section]
\newtheorem{remark}{Remark}[section]
\newtheorem{example}{Example}[section]
\newtheorem{problem}{Problem}[section]
\def\QED{~\rule[-1pt] {8pt}{8pt}\par\medskip}
\newenvironment{proof}{\noindent{\bf Proof: }}{\hspace*{\fill}\QED}
\def\QED{~\rule[-1pt]{5pt}{5pt}\par\medskip}
\def\ee{{\epsilon}}
\def\be{\begin{equation}}
\def\bi{\begin{itemize}}
\def\ee{\end{equation}}
\def\ei{\end{itemize}}
\def\z1{z^{-1}}
\def\la{\label}
\journal{ArXiv}
\begin{document}
\begin{frontmatter}
\title{On Convexification of Range Measurement Based Sensor and
Source Localization Problems}
\author{Bar\i\c{s} Fidan and Fatma Kiraz}

\address{School of Engineering, University of Waterloo, Waterloo, ON N2L 3G1\\ (\{fidan,fkiraz\}@uwaterloo.ca). Phone: +1 519 888-4567.}




\begin{abstract}
This manuscript is a preliminary pre-print version of a journal submission by the authors, revisiting the problem of range measurement based localization of a  signal source or a sensor. The major geometric difficulty of the problem comes from the non-convex structure of optimization tasks associated with range measurements, noting that the set of source locations corresponding to a certain distance measurement by a fixed point sensor is non-convex both in two and three dimensions. Differently from various recent approaches to this localization problem, all starting with a non-convex geometric minimization problem and attempting to devise methods to compensate the non-convexity effects, we suggest a geometric strategy to compose a convex minimization problem first, that is equivalent to the initial non-convex problem, at least in noise-free measurement cases. Once the convex equivalent problem is formed, a wide variety of convex minimization algorithms can be applied. The paper also suggests a gradient based localization algorithm utilizing the introduced convex cost function for localization. Furthermore, the effects of measurement noises are briefly discussed. The design, analysis, and discussions are supported by a set of numerical simulations.
\end{abstract}

\end{frontmatter}

\section{Introduction} \label{sec:Intro}
Over the last decade, there has been significant amount of studies on the problem of range or distance measurement based signal source/sensor localization \cite{TS05,Patwari05,hero,spawc,FDA08,MDD08,MFbook09,ZBbook12}. This problem is formulated in abstract terms in \cite{FDA08} as follows:

\begin{problem}\label{prob:main}
Given known 2 or 3- dimensional sensory station positions $x_1,\cdots,x_N$ ($N>2$ and $N>3$ in 2 and 3 dimensions respectively) and a signal source/target at unknown position $y^*$, estimate the value of $y^*$, from the measured  distances $d_i=\|y^*-x_i\|$.
\end{problem}
Problem \ref{prob:main} is defined in the form of a cooperative target/source localization task; nevertheless, it can be considered in the form of a sensor network node self-localization problem as well, where the $N$ stations represent $N$ anchors, and there is a $(N+1)$st sensor node at $y^*$ estimating its own position.

The major geometric difficulty of Problem \ref{prob:main} comes from the non-convex structure of optimization tasks associated with range measurements: The set of source locations corresponding to a certain distance measurement $d_i$ by a sensor located at point $x_i$ is non-convex both in two and three dimensions, in the form of a circle and a spherical shell, respectively. The generic attempt is then fusing all the distance measurements $d_1,\dots,d_N$ from the sensing points $x_1,\dots,x_N$, respectively, and finding the intersection of the non-convex source location sets $S(x_i,d_i)$ corresponding to the $(x_i,d_i)$ pairs. However, the non-convexity of these location sets limits the application of the algorithms devised based on intersection of the mentioned non-convex source location sets $S(x_i,d_i)$ and the corresponding non-convex cost functions.

This paper revisits Problem \ref{prob:main} following a different approach and suggests a geometric strategy to compose a convex geometric problem first, that is equivalent to the initially non-convex problem, at least in noise-free measurement cases. Once the convex equivalent problem is formed, a wide variety of convex minimization algorithms can be applied. The paper also suggests a gradient based localization algorithm based on the introduced convex cost function for localization. Furthermore, the effects of measurement noises are briefly discussed. The design, analysis, and discussions are supported by a set of numerical simulations.

The details of distance  measurement mechanisms used for the above problem is out of scope of this paper. Such details can be found, e.g., in \cite{MFbook09,ZBbook12}. Nevertheless, similar to \cite{FDA08}, for better visualization of the implementation of the localization task, we give here one mechanism example, received signal strength (RSS) approach: For a source
emitting a signal with source signal strength $A$ in a medium with power loss coefficient $\eta$, the RSS at a distance $d$ from the signal source is given by
\begin{equation} \la{eq:RSS}
s=A/d^\eta.
\end{equation}
Using (\ref{eq:RSS}), $d$ can be calculated given values of $A$, $s$, and $\eta$.

The rest of the paper is organized as follows: Section \ref{sec:Convex} introduces the proposed problem convexification strategy based on the notion of \emph{radical axis}. Section \ref{subsec:algorithm} proposes a gradient based localization algorithm minimizing the convex cost function introduced in Section \ref{sec:Convex}. Convergence analysis for the noise-free measurement cases is provided in Section \ref{subsec:analysis}. Simulation studies, including those testing the effects of measurement noises, are presented in Section \ref{sec:Simulations}. Closing remarks are given in Section \ref{sec:conc}.

\setcounter{equation}{0}
\section{Convexification of the Localization Problem} \label{sec:Convex}
\subsection{Non-Convex Cost Functions}
As stated in Section \ref{sec:Intro}, the approaches to Problem \ref{prob:main} in the literature start with a non-convex geometric minimization problem definition and attempt to devise methods to compensate the non-convexity effects. A typical natural selection of cost function to minimize \cite{FDA08} is
\begin{equation} \label{eq:Jnonconv}
J_1(y)=\frac{1}{2}\sum_{i=1}^N \lambda_i \left ( \|x_i-y\|^2-d_i^2\right )^2,
\end{equation}
where $\lambda_i$ ($i=1,\dots,N$) are positive weighting terms. A gradient localization algorithm based on minimization of the non-convex cost function (\ref{eq:Jnonconv}) has been proposed in \cite{FDA08}. Although this algorithm has proven stability and convergence properties, for these guaranteed properties to hold $y^*$ in Problem \ref{prob:main} is required to lie in a certain convex bounded region defined by the set $\{x_1,\dots,x_N\}$. Next we introduce a new cost function to overcome the aforementioned limitation.

\subsection{A Convex Cost Function Based on Radical Axes}
In two dimensions, if the distance measurements $d_i$ in Problem \ref{prob:main} are noise-free, the global minimizer of (\ref{eq:Jnonconv}) is located at $y^*$, where $J_1(y^*)=0$. Geometrically, $y^*$ is the intersection of the circles $C(x_i,d_i)$ with center $x_i$ and radius $d_i$. We re-formulate this later fact to form a convex cost function to replace the non-convex (\ref{eq:Jnonconv}), using the notion of \emph{radical axis}:

\begin{theorem} \label{thm:radax}\emph{(Fact 45 of \cite{Johnson07})} Given two non-concentric circles $C(c_1,r_1)$,$C(c_2,r_2)$, there is a unique line consisting of points $p$ holding equal powers with regard to these circles, i.e., satisfying
\[{\left\|p-c_1\right\|}^2-\ {r_1}^2={\left\|p-c_2\right\|}^2-\ {r_2}^2.\]
This line is perpendicular to the line connecting $c_1$ and $c_2$, and if the two circles intersect, passes through the intersection points.
\end{theorem}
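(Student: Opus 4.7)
The plan is to expand the defining equation algebraically and observe that the purely quadratic terms in $p$ cancel, leaving a single linear equation in the coordinates of $p$. Starting from
\[
\|p-c_1\|^2 - r_1^2 \;=\; \|p-c_2\|^2 - r_2^2
\]
and using the identity $\|p-c_i\|^2 = \|p\|^2 - 2 c_i^T p + \|c_i\|^2$ on both sides, the $\|p\|^2$ terms cancel and one obtains
\[
2 (c_2 - c_1)^T p \;=\; \|c_2\|^2 - \|c_1\|^2 + r_1^2 - r_2^2.
\]
Because the circles are non-concentric, $c_2 - c_1 \neq 0$, so this is a non-degenerate linear equation in the two unknown coordinates of $p$, and its solution set is therefore a unique straight line in the plane. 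This simultaneously establishes the existence and uniqueness parts of the statement.

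The two remaining geometric claims then follow directly from the form of this equation. The normal vector of the line is $c_2 - c_1$, which is precisely the direction of the segment joining the two centres; hence the radical axis is perpendicular to the line through $c_1$ and $c_2$. For the intersection property, if $p$ lies on both circles then $\|p-c_1\|^2 - r_1^2 = 0$ and $\|p-c_2\|^2 - r_2^2 = 0$, so $p$ trivially satisfies the equal-power equation and therefore belongs to the radical axis.

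The only real subtlety, and the single place the non-concentric hypothesis is genuinely used, is in verifying that the coefficient vector $c_2 - c_1$ is nonzero. Without this hypothesis the reduced equation degenerates either to $0 = 0$ (the entire plane, when additionally $r_1 = r_2$) or to $0 = \text{const} \neq 0$ (the empty set, when $r_1 \neq r_2$), and no well-defined radical axis exists. Beyond checking this one point, the argument is a routine algebraic manipulation, so I do not anticipate any serious obstacle.
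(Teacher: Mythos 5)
Your proof is correct. Note, however, that the paper does not prove this statement at all: it is quoted verbatim as Fact~45 of the cited reference (Johnson, \emph{Advanced Euclidean Geometry}), so there is no in-paper argument to compare against. Your algebraic derivation --- expanding $\|p-c_i\|^2=\|p\|^2-2c_i^{T}p+\|c_i\|^2$, cancelling the $\|p\|^2$ terms to obtain the linear equation $2(c_2-c_1)^{T}p=\|c_2\|^2-\|c_1\|^2+r_1^2-r_2^2$, reading off the normal direction $c_2-c_1$ for perpendicularity, and observing that a common point of the two circles has power zero with respect to both --- is the standard and complete proof, and you correctly isolate the non-concentricity hypothesis as the one place where degeneracy must be ruled out. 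It is in fact more self-contained than what the paper offers, and it connects cleanly to the explicit parametrization of the radical axis that the paper later derives in equations~(2.3)--(2.5).
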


The unique line mentioned in Theorem \ref{thm:radax} is called the \emph{radical axis} of $C(c_1,r_1)$ and $C(c_2,r_2)$ \cite{Johnson07}.

\begin{lemma}
\label{lem:ystar}
In 2 dimensions, if the distance measurements $d_i$ in Problem \ref{prob:main} are noise-free, the intersection set of the radical axes of any $N-1$ distinct circle pairs $C(x_i,d_i)$, $C(x_j,d_j)$ ($i\neq j$) is $\{y^*\}$.
\end{lemma}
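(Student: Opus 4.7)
The plan is to split the argument into two claims: (a) $y^*$ is contained in the radical axis of every circle pair $C(x_i,d_i),C(x_j,d_j)$, and (b) the $N-1$ chosen radical axes have no common points beyond $y^*$. Claim (a) gives one inclusion automatically, while claim (b) reduces to a rank/linear-independence argument about the coefficient vectors of these axes written as linear equations.

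For (a), I would invoke the noise-free hypothesis: $\|y^*-x_i\|^2=d_i^2$ for every $i$ forces the power of $y^*$ with respect to each circle $C(x_i,d_i)$ to vanish, so for every $i\neq j$,
\[\|y^*-x_i\|^2-d_i^2=0=\|y^*-x_j\|^2-d_j^2.\]
Theorem~\ref{thm:radax} then places $y^*$ on the radical axis of the pair, and hence on the intersection of any collection of such axes.

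For (b), I would turn the equal-powers condition into an affine-linear equation by subtracting $\|y\|^2$ on both sides,
\[2(x_j-x_i)^{\top}y=\|x_j\|^2-\|x_i\|^2+d_i^2-d_j^2,\]
which displays the $(i,j)$-radical axis as a line with normal direction $x_j-x_i$ (consistent with the perpendicularity stated in Theorem~\ref{thm:radax}). The intersection of the $N-1$ chosen radical axes is then the solution set of an $(N-1)\times 2$ linear system whose rows are $(x_{j_k}-x_{i_k})^{\top}$. Since $y^*$ already satisfies the system, this set equals $\{y^*\}$ if and only if the coefficient matrix has rank $2$, i.e., at least two of the direction vectors $x_{j_k}-x_{i_k}$ are linearly independent.

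The main obstacle is verifying this rank condition, which is a purely geometric requirement that the chosen $N-1$ segments $x_{i_k}x_{j_k}$ are not all parallel. Under the standing non-degeneracy of Problem~\ref{prob:main} (the anchors $\{x_i\}$ non-collinear, as needed for unique 2D localization), the natural star selection $(1,i)$ for $i=2,\ldots,N$ yields normals $\{x_i-x_1\}$ that span $\mathbb{R}^2$, so the rank condition is met and the intersection collapses to $\{y^*\}$. A pathological selection in which the $N-1$ chosen segments happen to share a single direction would collapse all chosen axes to a common line through $y^*$, mirroring the standard indeterminacy of 2D localization from collinear anchor subsets; this case is tacitly excluded by the well-posedness assumed in the problem statement.
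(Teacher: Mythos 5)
Your proof is correct, and it takes a genuinely more complete route than the paper's, which disposes of the lemma in one sentence by citing the problem definition together with the last statement of Theorem~\ref{thm:radax} (the radical axis of two intersecting circles passes through their intersection points) --- i.e.\ the paper only argues your claim (a), that $y^*$ lies on every radical axis, and leaves the reverse inclusion implicit. Your claim (b) is where the real content sits: writing each axis as the affine equation $2(x_j-x_i)^{\top}y=\|x_j\|^2-\|x_i\|^2+d_i^2-d_j^2$ and reducing uniqueness to the rank of the $(N-1)\times 2$ matrix of normals is exactly the right argument, and it exposes a genuine subtlety in the lemma as stated: for an \emph{arbitrary} choice of $N-1$ distinct pairs the directions $x_{j_k}-x_{i_k}$ need not span $\mathbb{R}^2$ even when the anchors are non-collinear (take $N=5$ anchors $(0,0),(1,0),(2,0),(0,1),(1,1)$ and the four pairs $(1,2),(1,3),(2,3),(4,5)$, all horizontal: every chosen radical axis is the same vertical line through $y^*$, so the intersection is a line, not a point). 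What rescues the paper downstream is that it only ever uses the sequential selection $(i,i+1)$, $i=1,\dots,N-1$, whose pair graph is a spanning path; connectivity of that graph plus non-collinearity of the $x_i$ (an assumption that appears only later, in Theorem~\ref{thm:conv}) forces two linearly independent normals, exactly as your star selection $(1,i)$ does. So your route buys a rigorous uniqueness argument and makes precise the hypothesis the lemma actually needs (the chosen pair directions must span the plane), whereas the paper's one-line proof silently assumes it.
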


\begin{proof}
The result straightforwardly follows from Problem \ref{prob:main} definition and the last statement of Theorem \ref{thm:radax}.
\end{proof}

In order to utilize Lemma \ref{lem:ystar}, we first derive the mathematical representation of the radical axis $l_{ij}$ of a circle pair $C(x_i,d_i)$, $C(x_j,d_j)$ ($i\neq j$) given the values of $x_i,x_j,d_i,d_j$. Such a radical axis line is illustrated in Figure \ref{fig:radaxline}.
\begin{figure}[htb]
\begin{center}
\includegraphics[scale=.5]{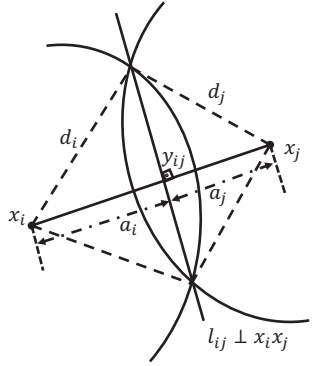}
\caption{Radical axis of a circle pair $C(x_i,d_i)$, $C(x_j,d_j)$.}
\label{fig:radaxline}
\end{center}
\end{figure}
$l_{ij}$ perpendicularly intersects $x_ix_j$ at $y_{ij}$. Hence any point $y$ on it satisfies
\begin{equation} \label{eq:lij}
{(y-y_{ij})}^Te_{ij}=0,
\end{equation}
where
\[e_{ij}=x_{j}-x_i.\]
It can be observed from Figure \ref{fig:radaxline} that
\begin{equation} \label{eq:yij}
y_{ij}=x_i+a_i\frac{e_{ij}}{\|e_{ij}\|},
\end{equation}
as well as ${d_i}^2-{a_i}^2={d_j}^2-{a_j}^2={d_j}^2-(\|e_{ij}\|-a_i)^2$,
from which $a_i$ can be calculated as
\begin{equation} \label{eq:ai0}
a_i=\frac{{\|e_{ij}\|^2+{d_i}^2-d^2_{j}}}{2\|e_{ij}\|}.
\end{equation}
The equations (\ref{eq:lij})--(\ref{eq:ai0}) form the explicit mathematical representation we were looking for.

Next, we focus on utilization of Lemma \ref{lem:ystar} to compose a convex alternative for (\ref{eq:Jnonconv}). Leaving the optimal selection of the $N-1$ distinct circle (or corresponding sensor node) pairs to a future study, we consider a sequential pair selection for the rest of this paper: For each $i\in\{1,...,N-1\}$, let pair $i$  denote the circle pair $C(x_i,d_i)$, $C(x_{i+1},d_{i+1})$, $l_i$ denote the corresponding radical axis, $y_i$ denote the intersection of $l_i$ and $x_ix_{i+1}$; and according ly let us use the following special case of \eqref{eq:yij},\eqref{eq:ai0}:
\begin{eqnarray}
y_{i}&=&x_i+a_i\frac{e_{i}}{\|e_{i}\|},~~e_{i}~=~x_{i+1}-x_i, \label{eq:yi}\\
a_i&=&\frac{{\|e_{i}\|^2+{d_i}^2-d^2_{i+1}}}{2\|e_{i}\|}. \label{eq:ai}
\end{eqnarray}
Lemma \ref{lem:ystar} implies that the intersection set of $l_1,\dots,l_{N-1}$ is $\{y^*\}$, i.e., $y^*$ is the unique point satisfying
\begin{equation} \label{eq:li}
{(y-y_{i})}^Te_{i}=0
\end{equation} for all $i\in\{1,...,N-1\}$. Hence $y^*$ is the unique solution of the equation $J(y)=0$, where
\begin{equation} \label{eq:J}
J\left(y\right)=\frac{1}{2}\sum^{N-1}_{i=1}{({{\left(y-y_i\right)}^Te_i)}^2}.
\end{equation}
Observing that (\ref{eq:J}) is a convex cost function, we reach the following result:

\begin{lemma}
\label{lem:J}
In 2 dimensions, if the distance measurements $d_i$ in Problem \ref{prob:main} are noise-free, then $y^*$ satisfies the following:
\begin{enumerate}
\item  The radical axis lines $l_1,\dots\l_{N-1}$ defined by (\ref{eq:li}) for $i=1,\dots,N-1$, respectively, intersect at $y^*$.
\item  $y^*$ is the unique local minimizer and, hence, the global minimizer of (\ref{eq:J}).
\end{enumerate}
\end{lemma}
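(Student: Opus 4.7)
The plan is to split the lemma into its two listed items and handle them in sequence, leaning heavily on Lemma \ref{lem:ystar} for the geometric content and on the sum-of-squares structure of $J$ for the optimization content.

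For item (1), I would simply specialize Lemma \ref{lem:ystar} to the sequential choice of pairs fixed in the paragraph preceding (\ref{eq:yi}). The $N-1$ circle pairs $(C(x_i,d_i),C(x_{i+1},d_{i+1}))$, $i=1,\dots,N-1$, are distinct circle pairs (they involve different index pairs), and $l_i$ as defined by (\ref{eq:li}) is precisely the radical axis of the $i$th such pair, using (\ref{eq:yi})--(\ref{eq:ai}) as the instance of (\ref{eq:yij})--(\ref{eq:ai0}). Hence Lemma \ref{lem:ystar} gives $\bigcap_{i=1}^{N-1}l_i=\{y^*\}$. In particular, $(y^*-y_i)^T e_i=0$ for every $i\in\{1,\dots,N-1\}$.

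For item (2), the key observation is that $J$ in (\ref{eq:J}) is a nonnegative quadratic, as it is a sum of squares. By item (1) each summand vanishes at $y^*$, so $J(y^*)=0$, which already proves $y^*$ is a global minimizer. Uniqueness of the global minimizer follows from the same sum-of-squares structure: if $J(\bar y)=0$ then every summand must vanish, i.e.\ $(\bar y-y_i)^T e_i=0$ for all $i$, so $\bar y$ lies on every $l_i$, and item (1) forces $\bar y=y^*$. To upgrade from uniqueness of the global minimizer to uniqueness of \emph{local} minimizers, I would invoke convexity: the Hessian $\sum_{i=1}^{N-1} e_i e_i^T$ is positive semidefinite as a sum of rank-one PSD matrices, so $J$ is convex, and for a convex function every local minimizer is a global minimizer.

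I do not expect a serious obstacle here — the work was done in Lemma \ref{lem:ystar} and in writing $J$ as a sum of squares. The one place to be careful is the passage from ``unique global minimizer'' to ``unique local minimizer'': the sum-of-squares/PSD-Hessian argument gives convexity but not automatically strict convexity, so the uniqueness conclusion must be sourced from the zero-set argument above rather than from strict convexity. If desired, one can additionally note that whenever $\{e_i\}_{i=1}^{N-1}$ spans $\mathbb{R}^2$ the Hessian becomes positive definite and $J$ is strictly convex; Lemma \ref{lem:ystar}'s guarantee of a singleton intersection already rules out all $e_i$ being parallel, so this stronger conclusion is in fact available, but it is not needed for the statement as written.
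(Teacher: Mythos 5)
Your proposal is correct and follows essentially the same route as the paper, which proves this lemma implicitly in the paragraph preceding its statement: Lemma \ref{lem:ystar} gives $\bigcap_i l_i=\{y^*\}$, hence $y^*$ is the unique zero of the nonnegative sum-of-squares $J$, and convexity of $J$ upgrades this to uniqueness of the local (hence global) minimizer. Your added care in distinguishing convexity from strict convexity, and in deriving uniqueness from the zero-set argument rather than from positive definiteness of $\sum_i e_ie_i^T$, is a small but genuine tightening of the paper's one-line ``observing that (\ref{eq:J}) is a convex cost function'' justification.
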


\setcounter{equation}{0}
\section{Localization Algorithm} \label{sec:Gradient}
\subsection{The Algorithm} \label{subsec:algorithm}
As indicated above, a variety of adaptive algorithms to reach $y^*$ in Problem \ref{prob:main} can be devised based on the convex cost function (\ref{eq:J}). Here, to accommodate easy analysis and comparison with \cite{FDA08}, we propose a gradient based adaptive algorithm, with the standard iterations of estimate updates in the negative gradient direction
\begin{equation}\label{eq:gradJ}
-\nabla J(y)=-\left (\frac{\partial J(y)}{\partial y}\right )=-\sum^{N-1}_{i=1}{\left[{\left(y-y_i\right)}^Te_i\right]e_i}.
\end{equation}
The corresponding gradient based adaptive localization algorithm is given by
\begin{equation} \label{eq:algorithm}
y\left[k+1\right]=y\left[k\right]-\mu \nabla J(y[k]),
\end{equation}
where $\mu$ is a small positive design coefficient.
\subsection{Convergence Anaylsis} \label{subsec:analysis}
Because of Lemma \ref{lem:J}, $\nabla J(y)=0$ if and only if $y=y^*$, and hence the algorithm (\ref{eq:algorithm}) settles at $y[k]=y^*$ once it reaches that point. Further discussions on selection of the gradient gain $\mu$ and its effects on the convergence of the gradient descent algorithm \eqref{eq:algorithm} can be found in many convex optimization books, see, e.g., \cite{Bertsekas,Polyak,Nesterov}. Here, we summarize the main convergence result for \eqref{eq:algorithm} with constant gain $\mu>0$:
\begin{theorem} \label{thm:conv}
Consider Problem \ref{prob:main} in 2 dimensions, with noise-free distance measurements $d_i$. Assume that the $x_i$, $i\in\{1,\dots,N\}$ are non-collinear. Then for every $M>0$, there exists a $ \mu^*(M)$ such that
\begin{equation} \label{eq:lim_y}
\lim_{k\rightarrow\infty}y[k]=y^*,
\end{equation}
whenever
\be \la{ybd} J(y[0]) \leq M \ee and \be \la{mbd} 0<\mu \leq
\mu^*(M), \ee
i.e., for any selection of $y[0]$, there exists a sufficiently small $\mu$ for the algorithm \eqref{eq:algorithm} such that \eqref{eq:lim_y} is satisfied.
\end{theorem}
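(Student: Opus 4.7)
My plan is to exploit the purely quadratic structure of $J$ in \eqref{eq:J}: I will rewrite the gradient iteration \eqref{eq:algorithm} as a linear recursion of the form $y[k+1]-y^* = (I-\mu A)(y[k]-y^*)$ for an explicit symmetric matrix $A$, and then pick $\mu$ so that $I-\mu A$ is a contraction.

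First I would recast the gradient \eqref{eq:gradJ} in matrix form. Because $y^*$ lies on every radical axis (Lemma \ref{lem:J}), the identity $(y^*-y_i)^T e_i = 0$ holds for each $i$, whence $(y-y_i)^T e_i = (y-y^*)^T e_i$ for every $y$. Substituting into \eqref{eq:gradJ} gives $\nabla J(y) = A(y-y^*)$, where
\[
A := \sum_{i=1}^{N-1} e_i e_i^T \in \mathbb{R}^{2\times 2}.
\]
The algorithm \eqref{eq:algorithm} then reduces to the linear recursion $y[k+1]-y^* = (I-\mu A)(y[k]-y^*)$.

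Second, I would verify that $A$ is positive definite under the non-collinearity hypothesis on $\{x_i\}$. Clearly $A$ is positive semidefinite. If it were singular, there would exist a unit vector $v\in\mathbb{R}^2$ with $v^T A v = \sum_i (v^T e_i)^2 = 0$, forcing $v\perp e_i$ for every $i$. A telescoping sum then yields $v^T(x_{i+1}-x_1) = \sum_{k=1}^{i} v^T e_k = 0$ for all $i$, placing every $x_i$ on the line through $x_1$ orthogonal to $v$ and contradicting non-collinearity. Hence $\lambda_{\min}(A) > 0$. This is the only step that genuinely uses the geometric hypothesis; I expect it to be the main (though still mild) obstacle.

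Finally, I would set $\mu^*(M) := 2/\lambda_{\max}(A) - \varepsilon$ for any fixed $\varepsilon \in (0, 2/\lambda_{\max}(A))$. For any $\mu \in (0,\mu^*(M)]$, the eigenvalues of $I-\mu A$ lie strictly inside $(-1,1)$, so $\|I-\mu A\|_2 =: \rho < 1$ and $\|y[k]-y^*\| \le \rho^k \|y[0]-y^*\| \to 0$, establishing \eqref{eq:lim_y}. Note that the resulting bound on $\mu$ does not actually depend on $M$, which reflects the fact that a purely quadratic cost has a constant Hessian; the sublevel-set parametrization in the theorem statement is therefore a stylistic device matching the general convex-optimization literature cited after \eqref{eq:algorithm}, and any finite $J(y[0])$ is admissible for the same universal choice of $\mu^*$.
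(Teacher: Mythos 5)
Your proof is correct, but it takes a genuinely different route from the paper's. The paper treats \eqref{eq:algorithm} as generic gradient descent on a convex function whose gradient is Lipschitz on the sublevel set $S_J(M)=\{y:~J(y)\leq M\}$, and invokes a standard descent result (Proposition 1.2.3 of Bertsekas, or Polyak) to obtain $J(y[k+1])\leq J(y[k])$ and $\nabla J(y[k])\to 0$, then appeals to Lemma \ref{lem:ystar} to identify the limit with $y^*$. You instead exploit the exact quadratic structure: since $(y^*-y_i)^Te_i=0$, the gradient collapses to $\nabla J(y)=A(y-y^*)$ with $A=\sum_{i=1}^{N-1}e_ie_i^T$, the iteration becomes the linear recursion $y[k+1]-y^*=(I-\mu A)(y[k]-y^*)$, and positive definiteness of $A$ --- which is exactly where non-collinearity enters, via your telescoping argument --- makes $I-\mu A$ a contraction for $0<\mu<2/\lambda_{\max}(A)$. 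Your version buys an explicit admissible step-size threshold, a geometric convergence rate, and an explicit demonstration of the role of the non-collinearity hypothesis, which the paper leaves implicit inside the uniqueness claim of Lemma \ref{lem:ystar} (if the $x_i$ were collinear, all radical axes would be parallel and $A$ singular). Your closing observation that $\mu^*$ need not depend on $M$ is also accurate: the cost is quadratic with constant Hessian $A$, so the paper's Lipschitz constant $L(M)$ is in fact $M$-independent too, and the sublevel-set phrasing is inherited from the general theory rather than forced by this problem. The paper's citation-based route is shorter and would extend to non-quadratic convex costs; for this specific $J$ your argument is self-contained and sharper.
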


\begin{proof}
Consider an arbitrary scalar $M>0$ and the corresponding convex st $S_J(M)=\{y:~J(y)\leq M\}$. Since \eqref{eq:J} is a quadratic and hence differentiable function of $y$, it is Lipschitz within $S_J(M)$, i.e., there exist $L(M)$ such that for any $\bar{y}_1, \bar{y}_2\in S_J(M)$,
$\|\nabla J(\bar{y}_1)-\nabla J(\bar{y}_2)\|\leq L(M)$. Hence, Proposition 1.2.3 of \cite{Bertsekas} (or Theorem 1 on pp.~21 of \cite{Polyak}) implies that, for $y[0]\in S_J(M)$ and
\be
0<\mu<\mu^*(M)=2/L(M),
\ee
we have
(i) $J(y[k+1])\leq J(y[k])$, and hence $y[k]\in S_J(M)$ for $k=0,1,\dots$; and (ii)
$\lim_{k\rightarrow\infty}\nabla J(y[k])=0$, which, by Lemma \ref{lem:ystar} is equivalent to \eqref{eq:lim_y}.
\end{proof}

\begin{remark}
Theorem \ref{thm:conv} implies that, in 2-D, the proposed algorithm \eqref{eq:algorithm} has global convergence property for noise-free measurement cases. Hence, using \eqref{eq:algorithm} in place of \eqref{eq:algorithm_old} of \cite{FDA08}, the limitations imposed by Theorems 3.1, 3.2, 4.3 of \cite{FDA08} on location of $y^*$ will be circumvented.
\end{remark}

\setcounter{equation}{0}
\section{Simulations} \label{sec:Simulations}
In this section, we present a summary of our simulations studies testing the performance of the localization algorithm proposed in the previous section. To accommodate easy and fair comparisons with the algorithm proposed in \cite{FDA08} and other algorithms in the literature considered in \cite{FDA08}, we present the results for the same sample settings in that paper. For all the simulations below, the step size for algorithms \eqref{eq:algorithm} and \eqref{eq:algorithm_old} is selected as $\mu=0.001$, similarly to \cite{FDA08}.

Examples 2.1 and 2.2 of \cite{FDA08} provide two noise free cases having false stationary points for the cost function \eqref{eq:Jnonconv}. Example 2.1 of \cite{FDA08} has a single false stationary point, which is unstable. Hence, for this example, the gradient adaptive law
\begin{equation} \label{eq:algorithm_old}
y\left[k+1\right]=y\left[k\right]-\mu \nabla J_1(y[k]),
\end{equation}
using the non-convex cost function \eqref{eq:Jnonconv} is guaranteed to converge to the actual target position $y^*$ for sufficiently small $\mu$. Convergence is not guaranteed for Example 2.2 of \cite{FDA08}, on the other hand, since there is a stable false stationary point at $[3,3]^T$ in this case, i.e., for a certain set of initial estimates $y[0]$, the algorithm \eqref{eq:algorithm_old} will converge to $[3,3]^T$ instead of $y^*=0$. It can be easily seen, based on Theorem \ref{thm:conv} that this is not the case for \eqref{eq:algorithm}, i.e., \eqref{eq:algorithm} converges to $y^*$ for both Examples 2.1 and 2.2 of \cite{FDA08}. The example below visualizes the second case.

\begin{example} \label{ex:FalseStationary} Consider Example 2.2 of \cite{FDA08}, i.e. Problem \ref{prob:main} with
$x_1=[1,1]^T$, $x_2=[1,3]^T$, $x_3=[3,1]^T$, and $y^*=0$. In this case $d_3^2=d_2^2=10$
and $d_1^2=2$. It was noted in \cite{FDA08} that $y^*_s=[3,3]^T$ is a locally stable spurious stationary
point in this case for the cost function \eqref{eq:Jnonconv} with $\lambda_i=1$. A case where the estimation algorithm \eqref{eq:algorithm_old} produces estimates converging to $y^*_s$ instead of $y^*$, even with noiseless distance measurements is observed for initial estimate $y[0]=[3,2]^2$, as shown by the blue dashed curve in Figure \ref{fig_Ex2dim_nonconv}. The black solid curve in the same figure shows the result using the proposed algorithm \eqref{eq:algorithm} for the same case and the same initial estimate,w hcih converges to the actual source position $y^*$.
\begin{figure}[htb]
\begin{center}
\includegraphics[scale=.45]{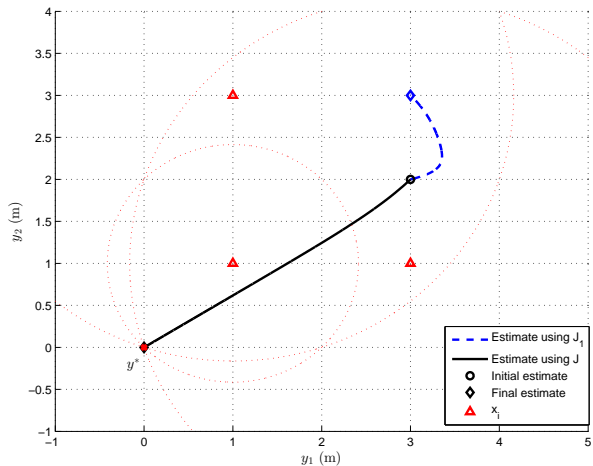}
\caption{Example \ref{ex:FalseStationary} with algorithm \eqref{eq:algorithm_old} of \cite{FDA08} (blue dashed) and the prosed algorithm \eqref{eq:algorithm} (black solid).}
\label{fig_Ex2dim_nonconv}
\end{center}
\end{figure}
\end{example}

The next example has the same settings as the one studied in Section V of \cite{FDA08} with results summarized Figure 10 of that paper, with noisy RSS based measurement settings, considering continuous source emission. These settings were used in \cite{FDA08} to compare the performance of the gradient algorithm \eqref{eq:algorithm_old} with maximum likelihood estimation and Projection on Convex Sets (POCS) algorithms \cite{hero,spawc}. Due to space limitations and since a comparison of \eqref{eq:algorithm_old} with the other aforementioned algorithms was already provided in \cite{FDA08}, here we present only comparison of \eqref{eq:algorithm} and \eqref{eq:algorithm_old}.

\begin{example} \label{ex:TrSP_SecVB}
Consider a network of four RSS sensors located at $[-2,-1]^T$, $[-1,-3]^T$, $[-1,1]^T$, $[1,0]^T$, subject to the signal model
\begin{equation}\label{eq:RSS_shadow}
    s=\omega_sA/d^\eta,
\end{equation}
where $\omega_s[{\rm dB}]\triangleq 10 \log \omega_s$ is a
zero-mean Gaussian noise, i.e. $\omega_s[{\rm
dB}]\sim\mathcal{N}(0,\sigma_s^2)$. \eqref{eq:RSS_shadow} includes a log-normal shadowing term $\omega_s$, as the dominant measurement noise source, in the RSS model \eqref{eq:RSS}. Let $\eta=3$. For this setting, the algorithms \eqref{eq:algorithm} and \eqref{eq:algorithm_old} have been comparatively tested for the shadowing noise standard deviation $\sigma_s$ varying from 0 to 5~dB. The mean square estimation errors for the 1000 randomly selected initial estimates and target positions (shown in Figure \ref{fig:RandSettings}) are shown in Figures \ref{fig:ein} and \ref{fig:eout}. Figure \ref{fig:ein} covers the cases where the target selections are inside the ellipse, and hence convergence of \eqref{eq:algorithm_old} is guaranteed for $\sigma_s=0$, while Figure \ref{fig:eout} covering the cases with target selections outside this ellipse. As can be seen in these figures, in addition to particular ill-conditioned cases such as the one described in Example \ref{ex:FalseStationary}, the proposed algorithm is effective in generic settings, especially when the shadowing noise is small.

\begin{figure}[htb]
\begin{center}
\includegraphics[scale=.45]{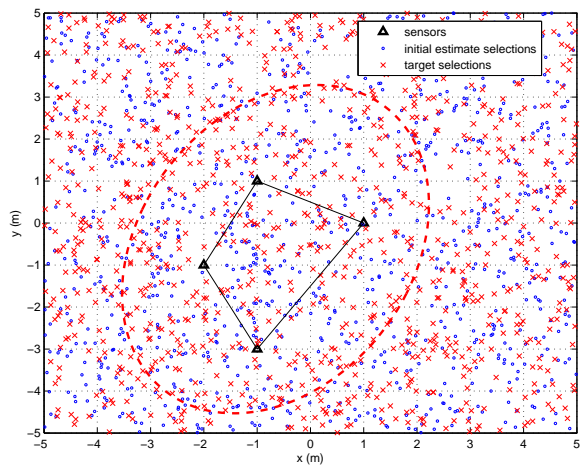}
\caption{Random initial condition and target selections for Example \ref{ex:TrSP_SecVB}. The ellipse is the one defined by Theorem 3.3. of \cite{FDA08}. For targets inside this ellipse, the algorithm \eqref{eq:algorithm_old} is guaranteed to converge with noise-free measurements.}
\label{fig:RandSettings}
\end{center}
\end{figure}

\begin{figure}[htb]
\begin{center}
\includegraphics[scale=.45]{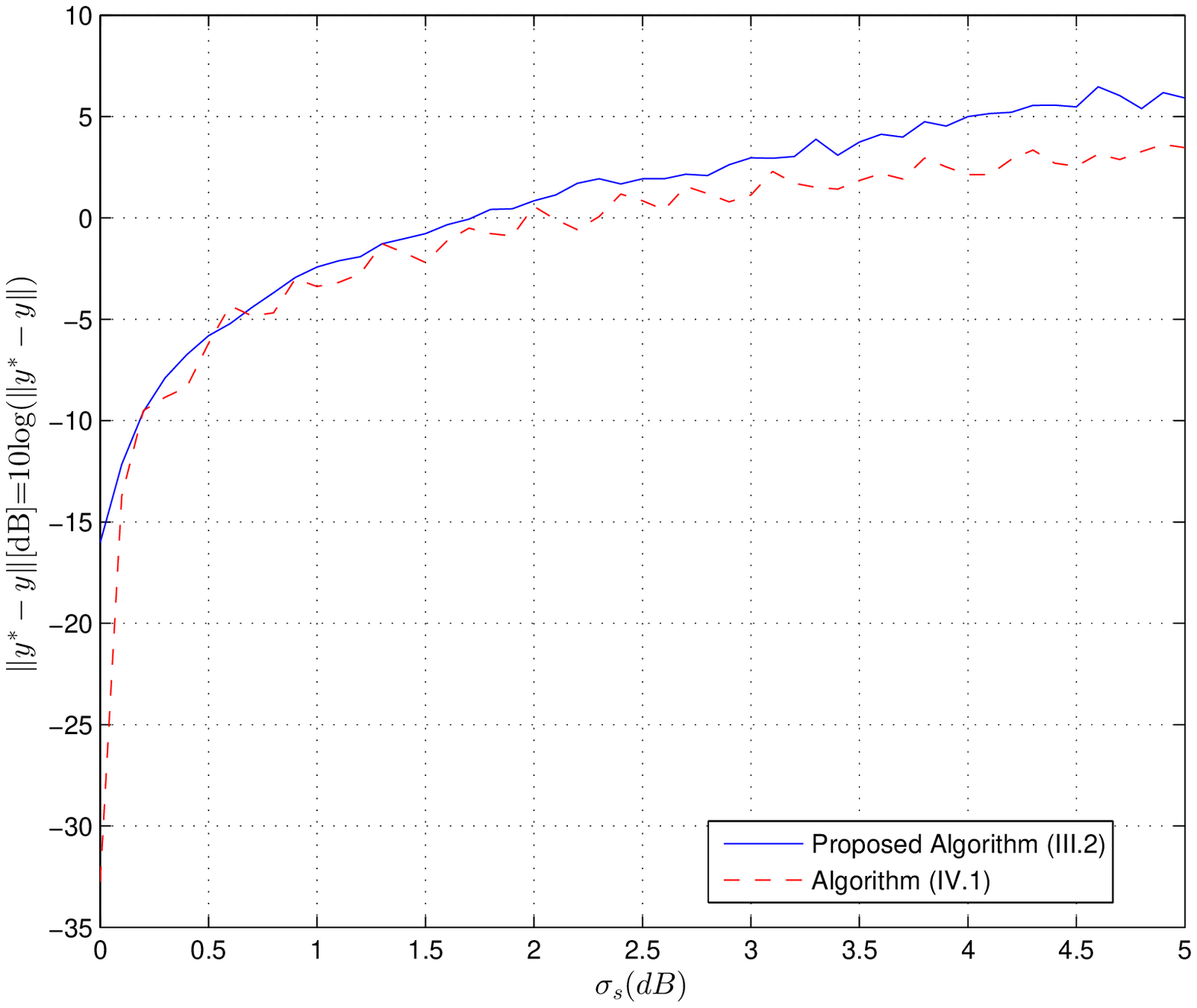}
\caption{Average estimation error versus shadowing noise standard deviation, with targets inside the ellipsoid region guaranteing convergence of \eqref{eq:algorithm_old}.}
\label{fig:ein}
\end{center}
\end{figure}

\begin{figure}[htb]
\begin{center}
\includegraphics[scale=.45]{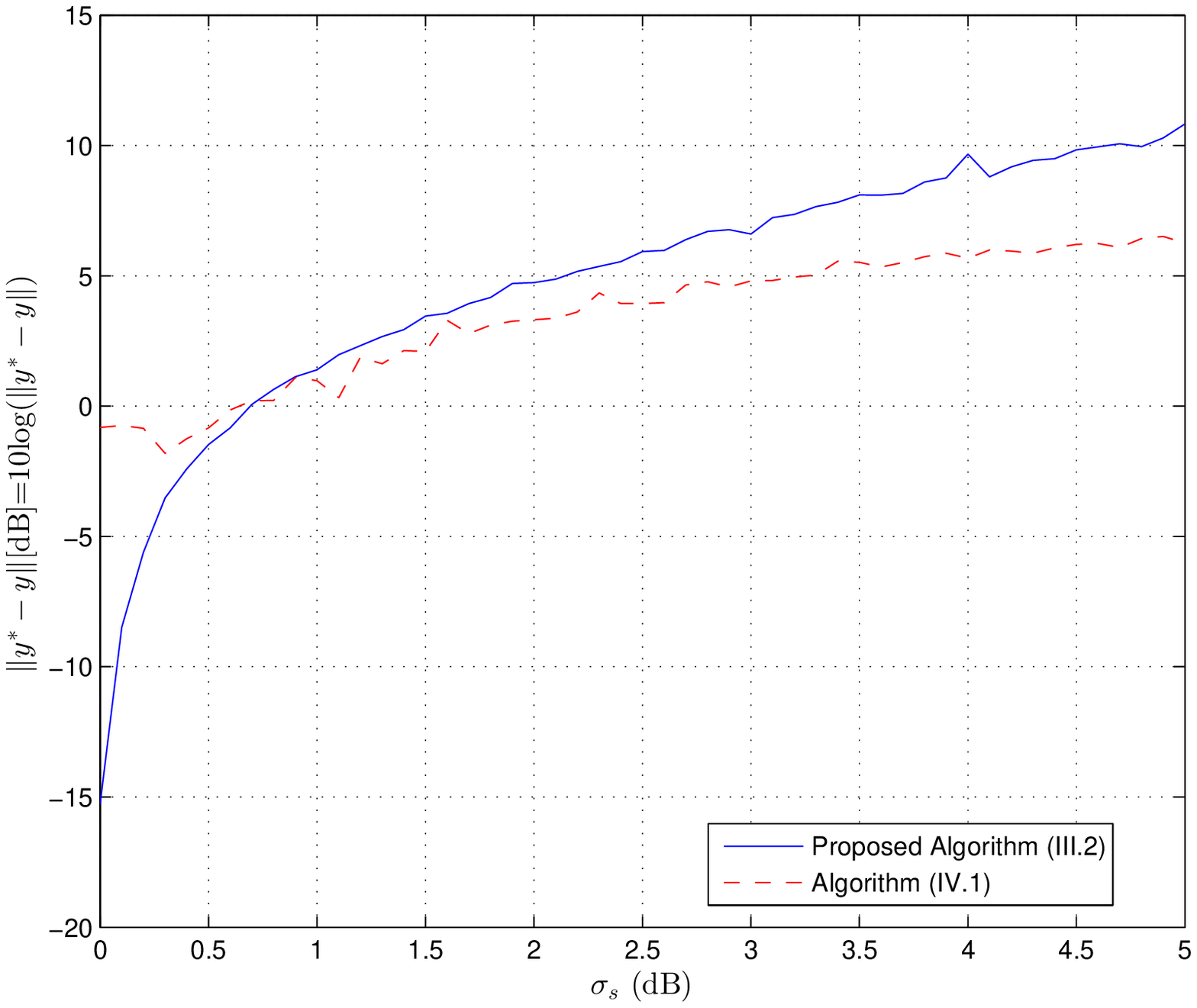}
\caption{Average estimation error versus shadowing noise standard deviation, with targets outside the ellipsoid region guaranteing convergence of \eqref{eq:algorithm_old}.}
\label{fig:eout}
\end{center}
\end{figure}

\end{example}

\section{Conclusion} \label{sec:conc}
We have proposed a geometric strategy to define range measurement based sensor network localization of a signal source/target as a convex optimization problem. Based on this strategy we have developed a gradient based localization algorithm, which is globally convergent for noise-free measurements. In addition to formal convergence analysis, the use of the proposed strategy and algorithm is demonstrated via a set of numerical simulations. The convexity and global convergence aspects of the proposed methodology make it a good candidate for implementation, especially in precise sensor network localization tasks involving multiple targets in a wide region of interest and high precision range sensors.


\end{document}